\documentclass[a4paper,11pt]{amsart}

\usepackage[english]{my-shortcuts}
\usepackage{srcltx,algorithm,algorithmic}
\usepackage{graphicx}   

\setlength{\hoffset}{-18pt}  	
\setlength{\oddsidemargin}{0pt} 	
\setlength{\evensidemargin}{9pt} 	
\setlength{\marginparwidth}{54pt} 	
\setlength{\textwidth}{481pt} 	
\setlength{\voffset}{-18pt} 	
\setlength{\marginparsep}{7pt} 	
\setlength{\topmargin}{0pt} 	
\setlength{\headheight}{13pt} 	
\setlength{\headsep}{10pt} 	
\setlength{\footskip}{27pt} 	
\setlength{\textheight}{708pt} 	

\title[A prediction bound for the LASSO]{On prediction with the LASSO when the design is not incoherent}
\author{St\'ephane Chr\'etien} \thanks{Laboratoire de Math\'ematiques, UMR 6623,
Universit\'e de Franche-Comt\'e, 16 route de Gray,
25030 Besancon, France. Email: stephane.chretien@univ-fcomte.fr}

\begin{document}
\maketitle


%
%
%

\begin{abstract}
The LASSO estimator is an $\ell_1$-norm penalized least-squares estimator, which was introduced for variable selection in the linear model. When the design matrix satisfies, e.g. the Restricted Isometry Property, or has a small coherence index, the LASSO estimator has been proved to recover, with high probability, the support and sign pattern of sufficiently sparse regression vectors. Under similar assumptions, the LASSO satisfies adaptive prediction bounds in various norms. The present note provides a prediction bound based on a new index for measuring how favorable is a design matrix for the LASSO estimator. We study the behavior of our new index for matrices with independent random 
columns uniformly drawn on the unit sphere. Using the simple trick of appending such a random 
matrix (with the right number of columns) to a given design matrix, we show that a prediction bound 
similar to \cite[Theorem 2.1]{CandesPlan:AnnStat09} holds without any constraint on the 
design matrix, other than restricted non-singularity. 
\end{abstract}

{\bf Keywords:} LASSO; Coherence; Restricted Isometry Property; $\ell_1$-penalization; High dimensional linear model.

\bigskip

\section{Introduction}
Given a linear model 
\bea 
y=X\beta +\epsilon
\eea
where $X\in \mathbb R^{n\times p}$ and $\epsilon$ is a random vector with gaussian distribution $\mathcal N(0,\sigma^2 I)$ 
the LASSO estimator is given by 
\bea 
\label{lasso}
\hat{\beta} & = & {\rm argmin}_{\beta \in \mathbb R^p} \frac12 \|y-X\beta\|_2^2+\lb \|\beta\|_1.
\eea
This estimator was first proposed in the paper of Tibshirani \cite{Tibshirani:JRSSB96}. 
The LASSO estimator $\hat{\beta}$ is often used in the high dimensional setting where $p$ is much 
larger than $n$. As can be expected, when $p\gg n$, 
estimation of $\beta$ is hopeless in general unless some 
additional property of $\beta$ is assumed. In many practical situations, it is considered relevant 
to assume that $\beta$ is sparse, i.e. has only a few nonzero components, or 
at least compressible, i.e. the magnitude of the non zero coefficients decays with high rate. It is 
now well recognized that the $\ell_1$ penalization of the likelihood often promotes sparsity under 
certain assumptions on the matrix $X$. We refer the reader to the book \cite{BuhlmannVanDeGeer:Springer11}
and the references therein for a state of the art presentation of the LASSO and the tools 
involved in the theoretical analysis of its properties. 
One of the main interesting properties of the LASSO estimator is that it is a solution of 
a convex optimization problem and it can be computed in polynomial time, 
i.e. very quickly in the sense of computational complexity theory. This makes a big 
difference with other approaches based on variable selection criteria like AIC
\cite{Akaike:IEEEAC74}, BIC \cite{Schwarz:AnnStat78}, 
Foster and George's Risk Inflation Criterion \cite{FosterGeorge:AnnStat94}, etc, which 
are based on enumeration of the possible models, 
or even with the recent proposals of Dalalyan, Rigollet and Tsybakov 
\cite{RigolletTsybakov:AnnStat11}, \cite{DalalyanTsybakov:JCSS12}, although enumeration 
is replaced with a practically more efficient Monte Carlo Markov Chain algorithm. 

In the problem of estimating $X\beta$, i.e. the prediction problem, 
it is often believed that the price to pay for reducing the variable selection approach 
to a convex optimization problem is a certain set of assumptions on the design matrix $X$
\footnote{Conditions for model selection consistency are given in e.g. \cite{ZhaoYu:JMLR06}, 
and for exact support and sign pattern recovery with finite samples and $p\gg n$, 
in \cite{CandesPlan:AnnStat09}, \cite{Wainwright:IEEEIT09}.}. 
One of the main contributions of \cite{RigolletTsybakov:AnnStat11} is that no particular 
assumption on $X$ is required for the prediction problem, as opposed to the known results 
concerning the LASSO such that \cite{BuneaTsybakovWegkamp:EJS07}, 
\cite{BickelRitovTsybakov:AnnStat09}, \cite{CandesPlan:AnnStat09} and \cite{vdGeer:EJS09}, 
and the many references cited in these works. 

An impressive amount of work has been done in the recents 
years in order to understand the properties of $\hat{\beta}$ unded various assumptions on $X$. 
See the recent book by P. Buhlmann and S. Van de Geer \cite{BuhlmannVanDeGeer:Springer11} for 
the state of the art. Two well known assumptions on the design matrix are 
\begin{itemize}
\item small Coherence $\mu(X)$
\item small Restricted Isometry Constant $\delta(X)$
\end{itemize}
where the Coherence $\mu(X)$ is defined as 
\bean 
\mu(X) & = & \max_{j,j^\prime} |X_j^tX_{j^\prime}|,
\eean 
and the Restricted Isometry Constant $\delta(X)$ is the smallest $\delta$ such that 
\bea
\label{rip}
(1-\delta) \|\beta_T\|_2 \le \|X_T \beta_T\|_2 \le (1+\delta) \|\beta_T\|_2
\eea 
for {\bf all} subset $T$ with cardinal $s$ and all $\beta \in \mathbb R^p$. Other 
conditions are listed in \cite{BuhlmannVandeGeer:EJS09}; see Figure 1 in that paper for a diagram 
summarizing all relationships between them. The Restricted Isometry property is very 
stringent and implies almost other conditions. Moreover, the Restricted Isometry Constant 
is NP-hard to compute for general matrices. On the other hand, the Coherence only 
requires of the order $np(p-1)$ elementary operations. However,  
it was proved in \cite{CandesPlan:AnnStat09} that a small coherence, say of 
the order of $1/\log(p)$, is sufficient to prove a property very close to the Restricted Isometry
Property: (\ref{rip}) holds for a large proportion of subsets $T\subset \{1,\ldots,p\}$, 
$|T|=s$ (of the order $1-1/p^\alpha$, $\alpha>0$). This result was later refined in 
\cite{ChretienDarses:SPL12} with better constants using the recently discovered 
Non-Commutative deviation inequalities \cite{Tropp:FOCM12}. Less stringent properties 
are the restricted eigenvalue, the irrepresentable and the compatibility properties. 

The goal of this short note is to show that, using a very simple trick, one can prove prediction bounds 
similar to \cite[Theorem 2.1]{CandesPlan:AnnStat09} without any assumption on the design matrix $X$ at the 
low expense of appending to $X$ a random matrix with independent columns uniformly distributed on the 
sphere. 

For this purpose, we introduce a new index for design matrices, denoted by 
$\gamma_{s,\rho_-}(X)$ that allows to obtain novel adaptive bounds on the prediction error. 
This index is defined for any $s\le n$ and $\rho_- \in (0,1)$ as 
\bea 
\gamma_{s,\rho_-}(X) & = & \sup_{v \in B(0,1)} \inf_{I \subset \mathcal S_{s,\rho_-}} \|X_I^t v\|_{\infty},
\eea
where $\mathcal S_{s,\rho_-}(X)$ is the family of all $S$ of $\{1,\ldots,p\}$ with 
cardinal $|S|=s$, such that $\sigma_{\min}(X_S)\ge \rho_-$. The meaning of the index $\gamma_{s,\rho_-}$ is the following: for any $v\in \R^n$, we look for the "almost orthogonal" family inside the set of columns of $X$ with 
cardinal $s$, which is the most orthogonal to $v$ . 

One major advantage of this new parameter is that imposing the condition that $\gamma_{s,\rho_-}$ 
is small is much less stringent than previous criteria required in the litterature. 
In particular, many submatrices of $X$ may be very badly conditioned or even singular without altering the 
smallness of $\gamma_{s,\rho_-}$. 
Computing the new index $\gamma_{s,\rho_-}(X)$ for random matrices with 
independent columns uniformly distributed on the sphere \footnote{or equivalently, 
post-normalized Gaussian i.i.d. matrices with components following $\mathcal N(0,1/n)$.}, 
shows that a prediction bound involving $\gamma_{s,\rho_-}(X)$ can be obtained which is of the same order 
as the bound of \cite[Theorem 2.1]{CandesPlan:AnnStat09}. 

One very nice property of the index $\gamma_{s,\rho_-}$ is that it decreases after the operation appending
any matrix to a given one. As a very nice consequence of this observation, the results obtained for random matrices
can be extended to any matrix $X$ to which a random matrix is appended. This trick can be used to 
prove new prediction bounds for a modified version of the LASSO obtained by appending a random matrix to 
any given design matrix. This simple modification of the LASSO retains the fundamental property of being polynomial
time solvable unlike the recent approaches based on non-convex criteria for which no computational complexity analysis 
is available. 

The plan of the paper is as follows. In Section \ref{MainRes} we present the index $\gamma_{s,\rho_-}$ for 
$X$ and provide an upper bound on this index for random matrices with independent columns 
uniformly distributed on the sphere, holding with high probability. 
Then, we present our prediction bound in Theorem \ref{main}: 
we give a bound on the prediction squared error 
$\|X(\beta-\hat{\beta}\|_2^2$ which depends linearly on $s$. This result is similar in spirit to 
\cite[Theorem 1.2]{CandesPlan:AnnStat09}. The proofs of the above results are given in Section \ref{Pf}. 
In Section \ref{GaussAppend}, we show how these results can be applied in practice to any problem 
with a matrix for which $\gamma_{s,\rho_-}$ is unknown by appending to $X$ an $n\times p_0$ random matrix 
with i.i.d. columns uniformly distributed on the unit sphere of $\R^n$ and with only $p_0=O(n^{\frac32}\rho_+)$ columns. An appendix contains the proof of some intermediate results.

\subsection{Notations and preliminary assumptions}
A vector $\beta$ in $\R$ is said to be $s$-sparse if exactly $s$ of its components are different from zero. 
Let $\rho_-$ be a positive real number. 
In the sequel, we will denote by $\mathcal S_{s,\rho_-}(X)$ 
the family of all index subsets $S$ of $\{1,\ldots,p\}$ 
with cardinal $|S|=s$, such that for all $S\in \mathcal S_{s,\rho_-}$, $\sigma_{\min}(X_S)\ge \rho_-$. 

\section{Main results}
\label{MainRes}
\subsection{A new index for design matrices}
\begin{defi}
\label{defgam}
The index $\gamma_{s,\rho_-}(X)$ associated with the matrix $X$ in $\mathbb R^{n\times p}$ is defined by 
\bea 
\gamma_{s,\rho_-}(X) & = & \sup_{v \in B(0,1)} \inf_{I \subset \mathcal S_{s,\rho_-}} 
\|X_I^t v\|_{\infty}.
\eea
\end{defi}
An important remark is that the function $X\mapsto \gamma_{s,\rho_-}(X)$ is nonincreasing in the sense that if 
we set $X^{\prime\prime}=[X,X^\prime]$, 
where $X^\prime$ is a matrix in $\mathbb R^{n \times p^\prime}$, then $\gamma_{s,\rho_-}(X)\ge \gamma_{s,\rho_-}(X^\prime)$.

Unlike the coherence $\mu(X)$, for fixed $n$ and $s$, the quantity $\gamma_{s,\rho_-}(X)$ is very small for 
$p$ sufficiently large, at least for random matrices such as normalized standard Gaussian matrices as shown 
in the following proposition. 
\begin{prop}
\label{gamtheo}
Assume that $X$ is random matrix in $\mathbb R^{n\times p}$ with i.i.d. columns with 
uniform distribution on the unit sphere of $\R^n$. 
Let $\rho_-$ and $\epsilon\in(0,1)$, $C_\kappa\in (0,+\infty)$ and $p_0\in \{\lceil e^{\frac6{\sqrt{2\pi}}}\rceil,\ldots,p\}$. 
Set 
\bean 
K_\epsilon & = & \frac{\sqrt{2\pi}}{6}\left(\left(1+C_\kappa \right) 
\log\left(1+\frac2{\epsilon}\right)+C_\kappa+\log\left(\frac{C_\kappa}{4}\right)\right).
\eean 
Assume that $n$, $\kappa$ and $s$ satisfy 
\bea
& n \ge 6, \\
\nonumber \\
\label{kappa}
& \kappa = \max\left\{ 4 e^{-2(\ln(2)-1)},\frac{4 e^3}{(1-\rho_-)^2} \ \left( \frac{(1+K_\epsilon)(1+C_{\kappa})}{c(1-\epsilon)^4}\right)^2 \log^2(p_0) \log(C_\kappa n)\right\}, \\
\nonumber \\
\label{n}
& \displaystyle{\frac{\max\left\{\kappa s, 2\times 36 \times 3 \times 3,\exp((1-\rho_-)/2) \right)}{C_{\kappa}}} 
\le n \le \min \left\{\left(\frac{p_0}{\log(p_0)}\right)^2,\frac{\exp\left(\frac{1-\rho_-}{\sqrt{2}}p_0 \right)}{C_\kappa}\right\}.
\eea 
Then, we have 
\bea 
\gamma_{s,\rho_-}(X) & \le & 80\ \frac{\log(p_0)}{p_0}
\eea
with probability at least $1-5 \ \frac{n}{p_0\ \log(p_0)^{n-1}}-9\ p_0^{-n}$. 
\end{prop}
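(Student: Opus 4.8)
The plan is to first reduce to a matrix with exactly $p_0$ columns. By the monotonicity remark following Definition \ref{defgam}, deleting columns can only increase $\gamma_{s,\rho_-}$, so $\gamma_{s,\rho_-}(X)\le\gamma_{s,\rho_-}(X^{(0)})$ where $X^{(0)}$ is the submatrix of the first $p_0$ columns, which still has i.i.d. spherical columns. It therefore suffices to bound $\gamma_{s,\rho_-}(X^{(0)})$. Unwinding the definition, for every $v\in B(0,1)$ I must produce an index set $I$ of size $s$ that is simultaneously \emph{nearly orthogonal} to $v$, in the sense $|X_j^t v|\le t$ for all $j\in I$ with threshold $t$ of order $\log(p_0)/p_0$, and \emph{well conditioned}, i.e. $\sigma_{\min}(X_I)\ge\rho_-$ so that $I\in\mathcal S_{s,\rho_-}$. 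I would first carry this out for a single fixed $v$ with overwhelming probability, and only afterwards make the statement uniform in $v$.

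For a fixed unit vector $v$, I would treat the two requirements separately. For near-orthogonality, rotational invariance gives that each $X_j^tv$ has density proportional to $(1-y^2)^{(n-3)/2}$ on $[-1,1]$, so the probability $q=\mathbb P(|X_j^tv|\le t)$ is of order $\sqrt n\,t$; taking $t$ of order $\log(p_0)/p_0$ makes the expected number of good columns $p_0q$ of order $\sqrt n\,\log(p_0)$, which exceeds $s$, and a binomial lower-tail estimate controls the deviation of the actual count from its mean. For conditioning, I would then select an $s$-subset $I$ from the good columns: the diagonal of the Gram matrix $X_I^tX_I$ equals $1$ and its off-diagonal part has small operator norm by the non-commutative Bernstein/Chernoff inequalities of \cite{Tropp:FOCM12}, as used in \cite{ChretienDarses:SPL12}, which gives $\sigma_{\min}(X_I)\ge\rho_-$ with high probability provided $s$ is small compared with $n$ up to logarithmic factors. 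Conditioning the columns on the thin equatorial band $\{|X_j^tv|\le t\}$ only reweights their law mildly and does not spoil this concentration. This step is exactly what forces the condition $n\ge\kappa s/C_\kappa$ in \eqref{n}, with $\kappa$ in \eqref{kappa} absorbing the $\log^2(p_0)\log(C_\kappa n)$ factors coming from the Bernstein bound and the subsequent union bound.

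The \textbf{main obstacle} is passing from a fixed $v$ to the supremum over all of $B(0,1)$, because the target $80\,\log(p_0)/p_0$ is far smaller than $1$ while the optimal $I$ depends on $v$. A standard $\epsilon$-net of fixed radius is useless here: transferring a subset $I$ from a net point $v_0$ to a nearby $v$ costs $\|X_I^t(v-v_0)\|_\infty\le\|v-v_0\|$, an additive error that would dominate the bound. The remedy is to take the net radius proportional to the threshold $t$, i.e. of order $\epsilon\,\log(p_0)/p_0$; then the transfer keeps $\|X_{I_{v_0}}^tv\|_\infty$ of order $\log(p_0)/p_0$, the slack being absorbed into the constant $80$ and into the $(1-\epsilon)$ powers of \eqref{kappa}. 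This rescaling inflates the net cardinality to order $(p_0/\log(p_0))^n$, whose logarithm is the term $\log(1+2/\epsilon)$ appearing in $K_\epsilon$, and a union bound of the two per-point failure probabilities over this net produces the two tail terms $5\,n/(p_0\log(p_0)^{n-1})$ and $9\,p_0^{-n}$. I expect the genuinely delicate part to be the joint calibration of the threshold $t$, the net radius and the oversampling factor $\kappa$, so that one simultaneously has enough good columns, a well-conditioned subset among them, and a net fine enough for the transfer, all while the failure probabilities beat the inflated net cardinality; verifying that the hypotheses \eqref{kappa}--\eqref{n} are exactly strong enough to close this chain is where the bulk of the work lies.
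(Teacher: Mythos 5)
Your proposal follows essentially the same route as the paper: for a fixed unit vector $v$, harvest an oversampled set of roughly $\kappa s$ columns nearly orthogonal to $v$ (your binomial count of columns in the band $\{|X_j^t v|\le t\}$ is equivalent to the paper's bound on the $\kappa s$-th order statistic of $|X_j^t v|$), extract from it a well-conditioned $s$-subset via the decoupling/matrix-Chernoff machinery of \cite{ChretienDarses:SPL12} (the paper's Lemma \ref{submat}), and then make the bound uniform over $v$ with a net whose radius is comparable to the threshold itself --- exactly the paper's choice $\epsilon=80\log(p_0)/p_0$ in the final step, which produces the $n/(p_0\log(p_0)^{n-1})$ tail term. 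You also correctly identify that a fixed-radius net would be useless here, which is the genuine crux of the uniformity step.

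The one place where your sketch papers over a real difficulty is the assertion that conditioning the selected columns on the band $\{|X_j^tv|\le t\}$ ``only reweights their law mildly.'' With $t$ of order $\log(p_0)/p_0$ that conditional law is supported on an extremely thin equatorial slab and is very far from the uniform law on the sphere, so one cannot simply invoke the unconditional concentration bounds for the columns that survive the selection. The paper avoids conditioning altogether: it bounds the coherence of $X_{\tilde I}$ by a union bound over all pairs among the $p_0$ columns, and the operator norm $\|X_{\tilde I}\|$ by a union bound over all ${p_0\choose \kappa s}$ subsets, so the estimates needed by Lemma \ref{submat} hold simultaneously for every possible selection, whichever one the greedy procedure outputs. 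Keeping your conditional formulation would require redoing the coherence and norm estimates for vectors uniform on the band (plausible, since the band is rotationally symmetric in the hyperplane orthogonal to $v$, but it is an extra argument you have not supplied). A minor bookkeeping slip: the term $\log(1+2/\epsilon)$ in $K_\epsilon$ comes from the fixed-radius net used to control $\|X_{\tilde I}\|$ in the norm step, not from the fine net of radius $80\log(p_0)/p_0$ over the sphere of $v$'s.
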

\begin{rem}
Notice that the constraints (\ref{kappa}) and (\ref{n}) together imply the following constraint on $s$:
\bean 
\label{s}
& s \le & C_{sparsity} \ \frac{ n}{ \log^2(p_0) \log(C_\kappa n)}
\eean 
with 
\bean 
C_{sparsity} & = & \frac{c^2(1-\rho_-)^2(1-\epsilon)^8}{4e^3} \ \frac{C_{\kappa}}{(1+K_{\epsilon})^2(1+C_{\kappa})^2}.
\eean 
\end{rem}

\subsection{A bound of $\|X(\beta-\hat{\beta})\|_2^2$ based on $\gamma_{s,\rho_-}(X)$}
In the remainder of this paper, we will assume that the columns of $X$ are $\ell_2$-normalized. 
The main result of this paper is the following theorem. 
\begin{theo}
\label{main}
Let $\rho_-\in (0,1)$. Let $\nu$ be a positive real such that 
\bea 
\label{nu}
\nu \ \gamma_{\nu n,\rho_-}(X) \le \frac{\rho_-\ \sigma_{\min}(X_S)}{n \ \max_{\stackrel{T\subset \{1,\ldots,p\}}{|T|\le n}}\sigma_{\max}(X_T)}.
\eea
Assume that $s\le \nu n$. Assume that $\beta$ has support  $S$ with cardinal $s$ and that  
\bea
\label{asslamb} 
& \lb \ge   \\
& \sg  \left(B_{X,\nu,\rho_-} \ \max_{\stackrel{T\subset \{1,\ldots,p\}}{|T|\le n}}\sigma_{\max}(X_T) 
\sqrt{2\alpha \ \log(p)+\log(2\nu n)}+ \sqrt{(2\alpha+1) \ \log(p)+\log(2)}
\right)  
\nonumber 
\eea 
with 
\bea 
B_{X,\nu,\rho_-} & = & 
\frac{\nu n \ \gamma_{\nu n,\rho_-}(X)}{\rho_- \ \sigma_{\min}(X_S)- \nu n \ \gamma_{\nu n,\rho_-}(X)\ 
\max_{\stackrel{T\subset \{1,\ldots,p\}}{|T|\le n}}\sigma_{\max}(X_T)}.
\eea
Then, with probability greater than $1-p^{-\alpha}$, we have 
\bea
\frac12 \|X(\hat{\beta}-\beta)\|_2^2 & \le & s \ C_{n,p,\rho_-,\alpha,\nu,\lb} 
\eea
with 
\bea
C_{n,p,\rho_-,\alpha,\nu,\lb} & = & \frac{\lb + \sg \sqrt{(2\alpha+1) \ \log(p)+\log(2)}}{
\rho_- \ \sigma_{\min}(X_S)}
\left(\sg \sqrt{2\alpha \ \log(p)+\log(2\nu n)}+\lb \right)
\eea
\end{theo}
\subsection{Comments}
Equation (\ref{nu}) in Theorem \ref{main} requires that 
\bea
\label{condgam}
\gamma_{\nu n,\rho_-}(X) & < & \rho_- \frac{\sigma_{\min}(X_S)}
{\nu n \ \max_{\stackrel{T\subset \{1,\ldots,p\}}{|T|\le n}}\sigma_{\max}(X_T)}. 
\eea 
Proposition \ref{gamtheo} proves that for random matrices with independent columns 
uniformly drawn on the unit sphere of $\mathbb R^n$ (i.e. normalized i.i.d. gaussian matrices), 
\bea 
\gamma_{s,\rho_-}(X) & \le & 80\ \frac{\log(p_0)}{p_0}
\eea
with high probability. The case of general design matrices can be treated using a simple trick. 
It will be studied in Section \ref{GaussAppend}. 

The main advantage of using the parameter $\gamma_{\nu n,\rho_-}(X)$ is that it allows $X$ to
contain extremely badly conditioned submatrices, a situation that may often occur in practice when 
certain covariates are very correlated. This is in contrast with the Restricted Isometry Property 
or the Incoherence condition, or other conditions often required in the litterature. 
On the other hand, the parameter $\gamma_{\nu n,\rho_-}(X)$ is not easily computable. 
We will see however in Section \ref{GaussAppend} how to circumvent this problem in practice by 
the simple trick consisting of appending a random matrix with $p_0$ columns to 
the matrix $X$ in order to ensure that $X$ satisfies (\ref{condgam}) with high 
probability.   

Finally, notice that unlike in \cite[Theorem 2.1]{CandesPlan:AnnStat09}, we make no  
assumption on the sign pattern of $\beta$. In particular, we do not require the sign pattern of 
the nonzero components to be random. Moreover, the extreme singular values of $X_S$ are not 
required to be independent of $n$ nor $p$ and the condition (\ref{condgam}) is satisfied for 
a wide range of configurations of the various parameters involved in the problem.

\section{Proofs} 
\label{Pf}
\subsection{Proof of Proposition \ref{gamtheo}}

\subsubsection{Constructing an outer approximation for $I$ in the definition of $\gamma_{s,\rho_-}$}

Take $v\in \mathbb R^n$. We construct an outer approximation $\tilde{I}$ 
of $I$ into which we be able to extract the set $I$. We procede recursively as follows: 
until $|\tilde{I}|=\min \{\kappa s,p_0/2\}$, for some positive real number $\kappa$ to be specified later, do 
\begin{itemize}
\item Choose $j_1={\rm argmin}_{j=1,\ldots,p_0} |\la X_{j},v\ra|$ and set $\tilde{I}=\{j_1\}$
\item Choose $j_2={\rm argmin}_{j=1,\ldots,p_0, \ j\not\in \tilde{I}} |\la X_{j},v\ra|$ and set $\tilde{I}=\tilde{I}\cup\{j_2\}$
\item $\cdots$
\item Choose $j_{k}={\rm argmin}_{j=1,\ldots,p_0, \ j\not\in \tilde{I}} |\la X_{j},v\ra|$ and set $\tilde{I}=\tilde{I}\cup\{j_k\}$. 
\end{itemize}

\subsubsection{An upper bound on $\|X^t_{\tilde{I}}v\|_\infty$}
If we denote by $Z_j$ the quantity $|\la X_{j},v\ra|$ and by $Z_{(r)}$ the $r^{th}$ 
order statistic, we get that 
\bean 
\|X^t_{\tilde{I}}v\|_\infty & = & Z_{(\kappa s)}.
\eean
Since the $X_j$'s are assumed to be i.i.d. with uniform distribution on the unit sphere of $\R^n$, 
we obtain that the distribution of $Z_{(r)}$ is the distribution of the $r^{th}$ order statistics
of the sequence $|X_j^tv|$, $j=1,\ldots,p_0$. 
By (5) p.147 \cite{Muirhead:AspectsMultAnal05}, $|X_j^tv|$ has density $g$ and CDF $G$ 
given by 
\bean
g(z) & = \frac1{\sqrt{\pi}} \frac{\Gamma\left(\frac{n}2\right)}{\Gamma\left(\frac{n-1}2\right)}
\left(1-z^2 \right)^{\frac{n-3}2}
\textrm{ and }
G(z) = & 2\ \int_{0}^z g(\zeta) \ d\zeta.
\eean
Thus,
\bean
F_{Z_{(r)}}(z) & = & \bP \left(B\ge r \right)
\eean
where $B$ is a binomial variable $\cal B\left(p_0,G(z)\right)$.
Our next goal is to find the smallest value $z_0$ of $z$ which satisfies 
\bea
\label{devorder}
F_{Z_{(\kappa s)}}(z_0) & \ge & 1-p_0^{-n}.
\eea
We have the following standard concentration bound for $B$ (e.g. \cite{Dubhashi:CUP09}): 
\bean 
\mathbb P \left(B \le (1-\epsilon) \mathbb E[B] \right) & \le &  \exp\left(-\frac12 \ \epsilon^2  \mathbb E[B]\right)
\eean 
which gives 
\bean 
\mathbb P \left(B \ge (1-\epsilon) p_0G(z) \right) & \ge &  1-\exp\left(-\frac12 \ \epsilon^2 p_0 G(z) \right)
\eean 
We thus have to look for a root (or at least an upper bound to a root) of the equation 
\bean 
G(z) & = & \frac1{\frac12 \ \epsilon^2 } \ \frac{n}{p_0} \ \log(p_0). 
\eean 
Notice that
\bean
G(z) & = & 2\ \frac1{\sqrt{\pi}} \frac{\Gamma\left(\frac{n}2\right)}{\Gamma\left(\frac{n-1}2\right)}
\ \int_{0}^z  \ \left(1-\zeta^2 \right)^{\frac{n-3}2} \ d\zeta, \\
& \ge & \frac1{\sqrt{\pi}} \frac{\Gamma\left(\frac{n}2\right)}{\Gamma\left(\frac{n-1}2\right)} \ z
\eean
for $z\le 1/\sqrt{2}$. By a straightforward application of Stirling's formula (see e.g. (1.4) 
in \cite{Qi:JIA10}), we obtain 
\bean 
\frac{\Gamma\left(\frac{n}2\right)}{\Gamma\left(\frac{n-1}2\right)} & \ge & \frac{e^{2\ln(2)}}{2} \ \frac{(n-3)^{3/2}}{(n-2)^{1/2}}. 
\eean 
Thus, any choice of $z_0$ satisfying
\bea
\label{left} 
z_0 & \ge &  \frac{2\ \sqrt{\pi}}{e^{2\ln(2)}} \ \frac{(n-2)^{1/2}}{(n-3)^{3/2}}\ \frac1{\frac12 \ \epsilon^2} \ 
\frac{n}{p_0} \ \log(p_0)
\eea
is an upper bound to the quantile for $(1-\epsilon) p_0 G(z_0)$-order statistics at level $p_0^{-n}$. 
We now want to enforce the constraint that 
\bean
(1-\epsilon) p_0 G(z_0) & \le & \kappa s. 
\eean 
By again a straightforward application of Stirling's formula, we obtain 
\bean
G(z) & \le & \frac1{\sqrt{\pi}}\ \frac{ e^2}{2} \ \frac{(n-3)^{3/2}}{(n-2)^{1/2}}\ z
\eean 
for $n\ge 4$. Thus, we need to impose that 
\bea
\label{right}
z_0 & \le & \frac{2 \sqrt{\pi}}{e^2} \ \frac{(n-2)^{1/2}}{(n-3)^{3/2}} \ \frac{\kappa s}{(1-\epsilon) p_0}. 
\eea 
Notice that the constraints (\ref{left}) and (\ref{right}) are compatible if 
\bean 
\kappa  & \ge & \frac{4}{e^{2(\ln(2)-1)}} \ \frac{1-\epsilon}{\epsilon^2} \ 
\frac{n}{s} \ \log(p_0) .
\eean 
Take $\epsilon=1-\frac1{n/s \log(p_0)}$ and obtain
\bean
\label{quantunifsph0} 
\bP \left( \|X^t_{\tilde{I}}v\|_\infty \ge \frac{8\ \sqrt{\pi}}{e^{2\ln(2)}} \ \frac{(n-2)^{1/2}}{(n-3)^{3/2}} \ \frac{n}{p_0} \ \log(p_0) \right) & \le & p_0^{-n}
\eean
for 
\bean 
\kappa & = & \frac{4}{e^{2(\ln(2)-1)}}  
\eean 
for any $p_0$ such that $n/s \log(p_0)\ge \sqrt{2}$, which is clearly the case as soon as $p_0\ge e^{\frac6{\sqrt{2\pi}}}$
for $s\le n$ as assumed in the proposition. 

If $n\ge 6$, we can simplify (\ref{quantunifsph0}) with 
\bea
\label{quantunifsph} 
\bP \left( \|X^t_{\tilde{I}}v\|_\infty \ge 80 \ \frac{\log(p_0)}{p_0} \right) & \le & p_0^{-n}
\eea

\subsubsection{Extracting a well conditionned submatrix of $X_{\tilde{I}}$}
The method for extracting $X_I$ from $X_{\tilde{I}}$ uses random column selection. For this 
purpose, we will need to control the coherence and the norm of $X_{\tilde{I}}$.

{\bf Step 1: The coherence of $X_{\tilde{I}}$}. Let us define the spherical cap 
\bean 
\cal C(v,h) & = & \left\{w \in \R^n \mid \la v,w\ra \ge h \right\}.
\eean 
The area of $\cal C(v,h)$ is given by 
\bean 
Area\left(\cal C(v,h)\right) & = &  Area (\cal S(0,1))  \int_0^{2h-h^2} t^{\frac{n-1}2}(1-t)^{\frac12} dt.
\eean
Thus, the probability that a random vector $w$ with Haar measure on the unit sphere $\cal S(0,1)$ falls into 
the spherical cap $\cal C(v,h)$ is given by 
\bean 
\bP \left(w\in \cal C(v,h) \right) & = & \frac{\cal C(v,h)}{\cal S(0,1)} \\
& = & \frac{\int_0^{2h-h^2} t^{\frac{n-1}2}(1-t)^{\frac12} dt}{\int_0^{1} t^{\frac{n-1}2}(1-t)^{\frac12} dt}. 
\eean 
The last term is the CDF of the Beta distribution. Using the fact that 
\bean 
\bP \left(X_j \in \cal C(X_{j^\prime},h) \right) & = & \bP \left(X_{j^\prime} \in \cal C(X_j,h) \right)
\eean 
the union bound, and the independence of the $X_j$'s, the probability that $X_j \in \cal C(X_{j^\prime},h)$ for some 
$(j,j^\prime)$ in $\left\{1,\ldots,p_0\right\}^2$ can be bounded as follows
\bean 
\bP \left(\cup_{j\neq j^\prime=1}^{p_0} \left\{ X_j \in \cal C(X_{j^\prime},h) \right\} \right) & = & \bP \left(\cup_{j<j^\prime=1}^{p_0} \left\{ X_j \in \cal C(X_{j^\prime},h) \right\} \right) \\
& \le & \sum_{j<j^\prime=1}^{p_0} \bP \left(\left\{ X_j \in \cal C(X_{j^\prime},h) \right\} \right) \\
& = & \sum_{j<j^\prime=1}^{p_0} \bE \left[ \bP\left(\left\{ X_j \in \cal C(X_{j^\prime},h) \right\} \mid X_{j^\prime}\right)\right] \\
& = & \frac{p_0(p_0-1)}2 \int_0^{2h-h^2} t^{\frac{n-1}2}(1-t)^{\frac12} dt.
\eean  
Our next task is to choose $h$ so that 
\bean 
\frac{p_0(p_0-1)}2 \int_0^{2h-h^2} t^{\frac{n-1}2}(1-t)^{\frac12} dt & \le & p_0^{-n}.
\eean  
Let us make the following crude approximation 
\bean 
\frac{p_0(p_0-1)}2 \int_0^{2h-h^2} t^{\frac{n-1}2}(1-t)^{\frac12} dt & \le & 
\frac{p_0^2}2  (2h)^{\frac{n-1}2} (2h-0).
\eean  
Thus, taking 
\bean 
h & \ge & \frac12 \ \exp\left( - 2\ \left(\log(p_0)+\frac{\log\left(p_0)-\log(2)\right)}{n+1}\right)\right) 
\eean
will work. Moreover, since $p_0\ge2$, we deduce that 
\bea
\label{mutilde} 
\mu (X_{\tilde{I}}) & \le & \frac12 \ p_0^{-2}
\eea 
with probability at least $1-p_0^{-n}$.

{\bf Step 2: The norm of $X_{\tilde{I}}$}.
The norm of any submatrix $X_{S}$ with $n$ rows and $\kappa s$ columns of $X$ has the following 
variational representation
\bean 
\|X_S\| & = & \max_{\stackrel{v \in \mathbb R^{n},\ \|v\|=1}{w \in \mathbb R^{\kappa s},\ \|w\|=1}}\ v^t X_S w.
\eean
We will use an easy $\epsilon$-net argument to control this norm. For any $v\in \mathbb R^n$, 
$v^t X_j$, $j\in S$ is a sub-Gaussian random variable satisfying
\bean 
\bP \left( |v^t X_j| \ge u \right) & \le & 2 \exp\left(-c n \ u^2 \right),
\eean
for some constant $c$. Therefore, using the fact that $\|w\|=1$, we have that 
\bean 
\bP \left( \left|\sum_{j\in S} v^t X_Sw \right| \ge u \right) & \le & 2 \exp\left(-c n \ u^2 \right).
\eean
Let us recall two useful results of Rudelson and Vershynin. The first one gives a bound on the 
covering number of spheres. 
\begin{prop} {\rm (\cite[Proposition 2.1]{RudelsonVershynin:CPAM09})}.
\label{net}
For any positive integer $d$, there exists an $\epsilon$-net of the unit sphere of $\R^d$ of cardinality 
\bean 
2d \left(1+\frac2{\epsilon}\right)^{d-1}.
\eean
\end{prop}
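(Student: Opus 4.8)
The statement is the classical volumetric covering bound for the Euclidean sphere, and the plan is to produce the net explicitly as a maximal $\epsilon$-separated subset of the sphere and then bound its cardinality by a packing argument, the only twist being to pack half-radius balls into a thin spherical \emph{shell} rather than into a ball, so that only $d-1$ powers of $(1+2/\epsilon)$ remain.

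First I would let $\mathcal N$ be a maximal subset of the unit sphere $\mathcal S$ of $\R^d$ with $\|x-y\|>\epsilon$ for all distinct $x,y\in\mathcal N$; such a set is finite (it lies in a compact set and is $\epsilon$-separated), and maximality forces it to be an $\epsilon$-net, since a point of $\mathcal S$ at distance more than $\epsilon$ from every element of $\mathcal N$ could otherwise be adjoined. Next I would note that the open balls $B(x,\epsilon/2)$, $x\in\mathcal N$, are pairwise disjoint --- a common point $z$ would give $\|x-y\|\le\|x-z\|+\|z-y\|<\epsilon$ --- and that, each centre having norm $1$, all of them lie in the shell $A=\{z\in\R^d:\ 1-\epsilon/2<\|z\|<1+\epsilon/2\}$. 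Comparing Lebesgue volumes and writing $c_d$ for the volume of the unit ball of $\R^d$, this gives
\[
|\mathcal N|\; c_d\,(\epsilon/2)^d \;\le\; \mathrm{vol}(A)\;=\;c_d\left[(1+\epsilon/2)^d-(1-\epsilon/2)^d\right].
\]

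Finally I would estimate the bracket by the elementary identity $a^d-b^d=(a-b)\sum_{k=0}^{d-1}a^k b^{d-1-k}\le d\,(a-b)\,a^{d-1}$, applied with $a=1+\epsilon/2$ and $b=1-\epsilon/2$ (so $a-b=\epsilon$), to obtain $\mathrm{vol}(A)\le c_d\,d\,\epsilon\,(1+\epsilon/2)^{d-1}$; dividing through by $c_d(\epsilon/2)^d$ and collapsing $(2/\epsilon)^{d-1}(1+\epsilon/2)^{d-1}=(1+2/\epsilon)^{d-1}$ yields $|\mathcal N|\le 2d\,(1+2/\epsilon)^{d-1}$, which is the claim. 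There is no genuine obstacle here; the one step that must be carried out with care is confining the half-radius balls to the shell $A$ rather than to $B(0,1+\epsilon/2)$, since it is precisely this sharpening that converts the crude exponent $d$ into the $d-1$ appearing in the statement.
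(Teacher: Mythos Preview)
Your argument is correct and is precisely the standard volumetric packing-in-a-shell proof; the paper itself does not give a proof of this proposition but merely cites it from Rudelson--Vershynin, where the same argument appears. The only small caveat is that your shell bound $(1+\epsilon/2)^d-(1-\epsilon/2)^d$ tacitly assumes $\epsilon\le 2$, but for $\epsilon>2$ a single point already forms an $\epsilon$-net of the sphere and the stated bound holds trivially.
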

The second controls the approximation of the norm based on an $\epsilon$-net. 
\begin{prop} {\rm (\cite[Proposition 2.2]{RudelsonVershynin:CPAM09})}.
\label{rv2}
Let $\mathcal N$ be an $\epsilon$-net of the unit sphere of $\R^d$ and let 
$\mathcal N^\prime$ be an $\epsilon^\prime$-net of the unit sphere of $\R^{d^\prime}$. Then 
for any linear operator $A: \R^d \mapsto \R^{d^\prime}$, we have 
\bean 
\|A\| & \le & \frac{1}{(1-\epsilon)(1-\epsilon^\prime)} \sup_{\stackrel{v \in \mathcal N}{w \in \mathcal N^\prime}}
|v^t Aw|. 
\eean   
\end{prop}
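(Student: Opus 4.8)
The plan is to derive the inequality through two successive net-approximation steps, since substituting both nets at once only produces the weaker constant $1/(1-\epsilon-\epsilon^\prime)$, whereas the advertised product constant $1/((1-\epsilon)(1-\epsilon^\prime))$ demands that the domain net and the codomain net be handled one at a time.

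First I would establish the one-net bound for the operator norm, namely $\|A\| \le \frac1{1-\epsilon}\sup_{x\in\mathcal N}\|Ax\|$, where $\mathcal N$ is an $\epsilon$-net of the unit sphere of the domain. To this end, pick a unit vector $x^\star$ with $\|Ax^\star\|$ arbitrarily close to $\|A\|$ and select $x\in\mathcal N$ with $\|x^\star-x\|\le\epsilon$. The triangle inequality then gives $\|Ax^\star\| \le \|Ax\| + \|A(x^\star-x)\| \le \sup_{x\in\mathcal N}\|Ax\| + \epsilon\|A\|$; taking the supremum over $x^\star$ and rearranging yields the claim.

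Next I would prove the dual one-net bound for the Euclidean norm of a fixed vector $z$, namely $\|z\| \le \frac1{1-\epsilon^\prime}\sup_{y\in\mathcal N^\prime}|\la y,z\ra|$, with $\mathcal N^\prime$ an $\epsilon^\prime$-net of the unit sphere of the codomain. Writing $\|z\|=\la y^\star,z\ra$ for the maximizing unit vector $y^\star$ and choosing $y\in\mathcal N^\prime$ with $\|y^\star-y\|\le\epsilon^\prime$, I would use $\la y,z\ra = \la y^\star,z\ra + \la y-y^\star,z\ra \ge \|z\| - \epsilon^\prime\|z\|$ together with the Cauchy--Schwarz inequality.

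Finally I would compose the two estimates: applying the first bound, and then the second with $z=Ax$ for each fixed $x\in\mathcal N$, gives
\bean
\|A\| & \le & \frac1{1-\epsilon}\ \sup_{x\in\mathcal N}\|Ax\| \\
& \le & \frac1{(1-\epsilon)(1-\epsilon^\prime)}\ \sup_{\stackrel{x\in\mathcal N}{y\in\mathcal N^\prime}} |\la y,Ax\ra|,
\eean
which is exactly the stated inequality once $v$ and $w$ are matched to $y$ and $x$ according to the domain and codomain of $A$. The only genuinely delicate point is the order of operations just described, keeping the two nets separate; everything else is routine triangle-inequality and Cauchy--Schwarz bookkeeping, so I expect no serious obstacle.
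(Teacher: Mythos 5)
Your argument is correct and complete: the two-step net approximation (first passing from the sphere of the domain to $\mathcal N$, then representing the Euclidean norm of each $Ax$ via the dual net $\mathcal N^\prime$) is exactly the standard proof of this fact, and it does yield the product constant $1/((1-\epsilon)(1-\epsilon^\prime))$ rather than the weaker additive one. Note that the paper itself gives no proof of this proposition --- it is imported verbatim from Rudelson and Vershynin --- so there is nothing internal to compare against; your proof is essentially the one in that reference. You were also right to flag the notational mismatch in the statement as printed: with $A:\R^d\to\R^{d^\prime}$ the expression $v^tAw$ only parses if $v$ ranges over the net of the \emph{codomain} sphere and $w$ over that of the domain, and your identification of $v,w$ with $y,x$ resolves this correctly.
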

Let $\mathcal N$ (resp. $\mathcal N^\prime$) be an $\epsilon$-net of the unit sphere of $\R^{\kappa s}$ 
(resp. of $\R^n$). On the other hand, we have that 
\bean 
\bP \left( \sup_{\stackrel{v \in \mathcal N}{w \in \mathcal N^\prime}}
|v^t Aw|\ge u \right) & \le & 2|\mathcal N||\mathcal N^\prime| \exp\left(-c n \ u^2 \right), \\
 & \le & 8 \ n \kappa s \left(1+\frac2{\epsilon}\right)^{n+\kappa s-2} \exp\left(-c n \ u^2 \right),
\eean 
which gives 
\bean 
\bP \left( \sup_{\stackrel{v \in \mathcal N}{w \in \mathcal N^\prime}}
|v^t Aw|\ge u \right) & \le & 8 \ \frac{n \kappa s \ \epsilon^2}{(2+\epsilon)^2} 
\exp\left(-\left(c n \ u^2 -(n+\kappa s) \log\left(1+\frac2{\epsilon}\right)\right)  \right).
\eean 
Using Proposition (\ref{rv2}), we obtain that 
\bean
\bP \left( \|X_S\| \ge u \right) & \le & \bP \left( \frac{1}{(1-\epsilon)^2} 
\sup_{\stackrel{v \in \mathcal N}{w \in \mathcal N^\prime}} |v^t Aw| \ge u  \right).
\eean
Thus, we obtain 
\bean
\bP \left( \|X_S\| \ge u \right) & \le & 8 \ \frac{n \kappa s \ \epsilon^2}{(2+\epsilon)^2} 
\exp\left(-\left(c n \ (1-\epsilon)^4 \ u^2-(n+\kappa s) \log\left(1+\frac2{\epsilon}\right)\right)  \right).
\eean
To conclude, let us note that 
\bean 
\bP\left(\|X_{\tilde{I}}\|\ge u \right) & \le & \bP \left( \max_{\stackrel{S \subset \{1,\ldots,p_0\}}{|S|=\kappa s}} \|X_S\| \ge u \right) \\
& \le & {p_0\choose {\kappa s}}\ 8 \ \frac{n \kappa s \ \epsilon^2}{(2+\epsilon)^2} 
\exp\left(-\left(c n \ (1-\epsilon)^4 \ u^2-(n+\kappa s) \log\left(1+\frac2{\epsilon}\right)\right)\right).
\eean 
and using the fact that 
\bean 
{p_0\choose {\kappa s}} & \le & \left(\frac{e \ p_0}{\kappa s}\right)^{\kappa s},
\eean
one finally obtains
\bean 
\bP\left(\|X_{\tilde{I}}\|\ge u \right) & \le & 8 \  
\exp\left(-\left(c n \ (1-\epsilon)^4 \ u^2-(n+\kappa s) \log\left(1+\frac2{\epsilon}\right)- 
\kappa s \log\left(\frac{e \ p_0}{\kappa s}\right) -\log\left(\frac{n \kappa s \ \epsilon^2}{(2+\epsilon)^2}\right)\right)\right).
\eean 
The right hand side term will be less than $8 p_0^{-n}$ when 
\bean 
n \log(p_0) & \le & c n \ (1-\epsilon)^4 \ u^2-(n+\kappa s) \log\left(1+\frac2{\epsilon}\right)- 
\kappa s \log\left(\frac{e \ p_0}{\kappa s}\right) -\log\left(\frac{n \kappa s \ \epsilon^2}{(2+\epsilon)^2}\right).
\eean
This happens if 
\bean 
u^2 & \ge & \frac1{c (1-\epsilon)^4} \left(n \frac{\log(p_0)}{n}+\left(1+\frac{\kappa s}{n}\right) 
\log\left(1+\frac2{\epsilon}\right)+ \frac{\kappa s}{n} 
\log\left(\frac{e \ p_0}{\kappa s}\right)+\frac1{n}\log\left(\frac{n \kappa s \ \epsilon^2}{(2+\epsilon)^2}\right)\right).
\eean
Notice that 
\bea
\label{sn} 
& &\left(1+\frac{\kappa s}{n}\right) 
\log\left(1+\frac2{\epsilon}\right)+\frac{\kappa s}{n} 
\log\left(\frac{e}{\kappa s}\right)
+\frac1{n}\log\left(\frac{n \kappa s \ \epsilon^2}{(2+\epsilon)^2}\right) \\
& & \hspace{3cm} \le 
\left(1+C_\kappa \right) 
\log\left(1+\frac2{\epsilon}\right)+C_\kappa+\frac1{n}\log\left(\frac{C_\kappa n^2}{4}\right), \nonumber \\
& & \hspace{3cm} \le K_{\epsilon} \ \frac{6}{\sqrt{2\pi}}, \nonumber
\eea 
since $n\ge 1$. Now, since
\bean 
\frac{6}{\sqrt{2\pi}} & \le \log(p_0) \le & \frac{n+\kappa s}{n} \log(p_0),
\eean 
we finally obtain 
\bea
\label{normbnd}
\bP\left(\|X_{\tilde{I}}\|\ge \frac{1+K_\epsilon} 
{c (1-\epsilon)^4}\frac{n+\kappa s}{n} \log(p_0) \right) & \le &  \frac8{p_0^{n}}.
\eea

{\bf Step 3}. We will use the following lemma on the distance to identity of randomly 
selected submatrices. 
\begin{lemm} 
\label{submat}
Let $r\in(0,1)$. Let $n$, $\kappa$ and $s$ satisfy conditions (\ref{n}) and (\ref{kappa}) assumed in 
Proposition \ref{gamtheo}.
Let $\Sigma\subset \left\{1,\ldots,\kappa s\right\}$ 
be a random support with uniform distribution on index sets with cardinal $s$.
Then, with probability greater than or equal to $1-9 \ p_0^{-n}$ on $X$, the following bound holds:
\bea \label{sing}
\bP \left(\|X^t_{\Sigma}X_{\Sigma}-\Id_{s} \|\ge r \mid X\right) & < & 1.
\eea
\end{lemm}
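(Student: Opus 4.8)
The plan is to argue conditionally on the good event for $X_{\tilde{I}}$ already isolated in Steps~1 and~2, and then to show that a \emph{uniformly random} choice of $s$ columns among the $\kappa s$ columns of $X_{\tilde{I}}$ yields a well-conditioned Gram matrix with positive probability. Set $m=\kappa s$, write $B=\frac{1+K_\epsilon}{c(1-\epsilon)^4}\,\frac{n+\kappa s}{n}\log(p_0)$, and let $E$ be the event on which both the coherence bound (\ref{mutilde}), $\mu(X_{\tilde{I}})\le\frac12 p_0^{-2}$, and the norm bound (\ref{normbnd}), $\|X_{\tilde{I}}\|\le B$, hold. Combining the two complementary probabilities from (\ref{mutilde}) and (\ref{normbnd}) by a union bound gives $\bP(E)\ge 1-p_0^{-n}-8\,p_0^{-n}=1-9\,p_0^{-n}$, which is exactly the probability stated in the lemma; the rest of the argument takes place on $E$, where $X$ is treated as fixed.

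On $E$ it suffices to establish the expectation bound $\bE_\Sigma\|X_\Sigma^t X_\Sigma-\Id_s\|<r$, since Markov's inequality then gives $\bP(\|X_\Sigma^t X_\Sigma-\Id_s\|\ge r\mid X)\le r^{-1}\,\bE_\Sigma\|X_\Sigma^t X_\Sigma-\Id_s\|<1$, which is precisely (\ref{sing}). The convenient object to analyse is the hollow Gram matrix $H=X_{\tilde{I}}^t X_{\tilde{I}}-\Id_m$: because the columns are $\ell_2$-normalized, $H$ has vanishing diagonal, its entries are bounded in modulus by $\mu(X_{\tilde{I}})$, and $\|H\|\le\|X_{\tilde{I}}\|^2+1\le B^2+1$. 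Viewing $\Sigma$ as selecting a principal submatrix, $X_\Sigma^t X_\Sigma-\Id_s=P_\Sigma H P_\Sigma$, so the problem reduces to bounding the expected spectral norm of a random principal submatrix of $H$.

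The analytic engine is a random-submatrix norm estimate of the type established by Tropp and by Rudelson and Vershynin (\cite{Tropp:FOCM12}, \cite{RudelsonVershynin:CPAM09}). Passing, via the standard comparison between uniform sampling without replacement and i.i.d. Bernoulli$(\delta)$ selection with $\delta=s/m=1/\kappa$, to an independent selector, a decoupling step together with the non-commutative Khintchine / matrix Bernstein inequality produces a bound of the form $\bE_\Sigma\|P_\Sigma H P_\Sigma\|\le C_1\,\delta\log(m)\,\|H\|+C_2\,\sqrt{\delta\log(m)}\,\max_j\|He_j\|_2$. Each column of $H$ has at most $m-1$ nonzero entries, each of modulus at most $\mu(X_{\tilde{I}})\le\frac12 p_0^{-2}$, so $\max_j\|He_j\|_2\le\sqrt{m}\,\mu(X_{\tilde{I}})\le\frac12 p_0^{-1}$ once one uses $m=\kappa s\le n\le(p_0/\log(p_0))^2$ from (\ref{n}); hence the second term is negligible. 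For the first term, $\delta\|H\|\le\kappa^{-1}(B^2+1)$, while $\log(m)\le\log(C_\kappa n)$, and $(n+\kappa s)/n\le 2$ gives $B^2\le 4\big((1+K_\epsilon)/(c(1-\epsilon)^4)\big)^2\log^2(p_0)$.

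Collecting these estimates, the dominant contribution is $C_1\,\kappa^{-1}\,B^2\,\log(C_\kappa n)$, and the definition (\ref{kappa}) of $\kappa$ is calibrated precisely so that this quantity is smaller than $r$: dividing $B^2\log(C_\kappa n)$ by the second term in the maximum defining $\kappa$ leaves the factor $(1-\rho_-)^2/\big(e^3(1+C_\kappa)^2\big)$, whose numerator $(1-\rho_-)^2$ encodes the admissible range of $r$ (recall $r$ will be taken of order $1-\rho_-^2$ when translating $\|X_\Sigma^t X_\Sigma-\Id_s\|<r$ into $\sigma_{\min}(X_\Sigma)\ge\rho_-$) and whose $e^3$ absorbs the universal constant $C_1$. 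The companion constraints (\ref{n}) simultaneously ensure $m=\kappa s\le n$, keep $p_0$ large enough for Steps~1--2, and keep $\log(m)\le\log(C_\kappa n)$. This yields $\bE_\Sigma\|X_\Sigma^t X_\Sigma-\Id_s\|<r$ on $E$, and Markov's inequality closes the proof. The main obstacle is obtaining the random-submatrix estimate with constants explicit enough to be matched against the intricate definition of $\kappa$; once the decoupling is in place, checking that both $\delta\|H\|\log(m)$ and $\sqrt{\delta\log(m)}\,\max_j\|He_j\|_2$ drop below $r$ is the delicate but essentially mechanical bookkeeping that the definitions of $\kappa$ and $K_\epsilon$ are engineered to make succeed.
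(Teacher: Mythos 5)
Your proposal follows the paper's architecture almost exactly up to the final analytic step. The good event you call $E$, obtained by intersecting the coherence bound (\ref{mutilde}) and the norm bound (\ref{normbnd}) and costing $p_0^{-n}+8\,p_0^{-n}=9\,p_0^{-n}$ in probability, is precisely the conditioning used in the appendix, and your passage from uniform sampling without replacement to i.i.d.\ Bernoulli$(1/\kappa)$ selectors followed by decoupling reproduces the paper's inequalities (\ref{poisse}) and (\ref{dec}), which are quoted from \cite{CandesPlan:AnnStat09} and \cite[Proposition 4.1]{ChretienDarses:SPL12} with explicit constants $2$ and $36$. Where you diverge is in how the decoupled quantity is controlled: the paper invokes the tail bound \cite[Proposition 4.2]{ChretienDarses:SPL12}, which bounds $\mathbb P(\|RHR'\|\ge r')$ directly by $3\kappa s\,\mathcal V(s,[r',u,v])$ in terms of $\|X_{\tilde I}\|$ and $\mu(X_{\tilde I})$, and then tunes the free parameters $u$, $v$, $C_{\mathcal V}$ so that the product of all prefactors with the three terms of $\mathcal V$ drops below $1$; you instead propose an expected-norm estimate of Rudelson--Vershynin/Tropp type followed by Markov's inequality. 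Your route is conceptually cleaner for this particular lemma, since the conclusion only requires the conditional probability to be strictly less than $1$ and an expectation bound below $r$ suffices; the paper's tail-bound route gives more quantitative information that the lemma does not use.

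The one genuine soft spot is that your key inequality $\mathbb E\|P_\Sigma HP_\Sigma\|\le C_1\,\delta\log(m)\|H\|+C_2\sqrt{\delta\log(m)}\max_j\|He_j\|_2$ is asserted rather than established, and with unspecified $C_1,C_2$ the final comparison against the very specific constant in (\ref{kappa}) cannot actually be checked; the definition of $\kappa$ in the paper is reverse-engineered from the explicit constants of \cite[Proposition 4.2]{ChretienDarses:SPL12} (the factors $4e^3$, the $\log^2(p_0)\log(C_\kappa n)$, and the lower bound $2\times36\times3\times3$ on $C_\kappa n$ in (\ref{n}) all come from that accounting), so substituting a differently-normalized inequality would require re-deriving $\kappa$ rather than matching the stated one. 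Two smaller points: the bound $(n+\kappa s)/n\le 2$ should be $(n+\kappa s)/n\le 1+C_\kappa$, which is what condition (\ref{n}) actually delivers via $\kappa s\le C_\kappa n$ and what appears in (\ref{sn}); and your closing remark about taking $r$ of order $1-\rho_-^2$ is a reasonable repair of a step the paper itself treats loosely (it takes $r=1-\rho_-$, which yields $\sigma_{\min}(X_{\tilde{\tI}})\ge\sqrt{\rho_-}\ge\rho_-$, so both normalizations work). None of these affects the validity of the overall strategy.
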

\begin{proof}
See Appendix.
\end{proof}

Taking $r=1-\rho_-$, we conclude from Lemma \ref{submat} that, for any $s$ satisfying (\ref{s}), 
there exists a subset $\tilde{\tI}$ of $\tilde{I}$ with cardinal $s$ such that 
\bean 
\sigma_{\min}\left(X_{\tilde{\tI}}\right)\ge \rho_-.
\eean

\subsubsection{The supremum over an $\epsilon$-net}
Recalling Proposition \ref{net}, there exists an $\epsilon$-net $\cal N$ covering the unit sphere in $\R^n$ with cardinal 
\bean 
|\mathcal N | & \le & 2n\left( 1+\frac2{\epsilon}\right)^{n-1}.
\eean
Combining this with (\ref{quantunifsph}), we have that 
\bea
\nonumber
& \bP \left( 
\sup_{v\in \cal N} \inf_{I\subset \cal S_{s,\rho_-}} \left\|X_I^tv \right\| \ge 
\frac{8\ \sqrt{\pi}}{e^{2\ln(2)}} \ \frac{n\ (n-2)^{1/2}}{(n-3)^{3/2}} \ \frac{\log(p_0)}{p_0} \right) \\ 
\label{netcontrol}
& \le 2n\left( 1+\frac2{\epsilon}\right)^{n-1} \ p_0^{-n}+9\ p^{-n}.
\eea

\subsubsection{From the $\epsilon$-net to the whole sphere}

For any $v^\prime$, one can find $v\in \cal N$ with $\|v^\prime-v\|_2\le \epsilon$. Thus, we have 
\bea
\nonumber
\|X_I^tv^\prime \|_{\infty} & \le & \|X_I^t v \|_{\infty}+ \|X_I^t (v^\prime-v) \|_{\infty} \\
\nonumber 
& \le & \|X_I^t v \|_{\infty}+ \max_{j\in I} |\la X_j,(v^\prime-v) \ra |\\
\nonumber 
& \le & \|X_I^t v \|_{\infty}+ \max_{j\in I} \| X_j\|_{2} \|v^\prime-v\|_2 \\
\label{sph} & \le & \|X_I^t v \|_{\infty}+ \epsilon.
\eea
Taking 
\bean 
\epsilon & = & 80\ \frac{\log(p_0)}{p_0},
\eean 
we obtain from (\ref{sph}) and (\ref{netcontrol}) that 
\bean 
& \bP \left( 
\sup_{\|v\|_2=1} \inf_{I\subset \cal S_{s,\rho_-}} \left\|X_I^tv \right\| \ge 
80\ \frac{\log(p_0)}{p_0}\right) \\ & \le 20 \ n\left( 1+\frac{p_0}{80\log(p_0)}\right)^{n-1} \ p_0^{-n}+9\ p_0^{-n}
\eean 
and thus,  
\bean 
& \bP \left(\sup_{\|v\|_2=1} \inf_{I\subset \cal S_{s,\rho_-}} \left\|X_I^tv \right\| \ge 
80\ \frac{\log(p_0)}{p_0}\right) \\ & \le 5 \ \frac{n}{p_0\ \log(p_0)^{n-1}}+9\ p_0^{-n},
\eean
for $p_0\ge \exp(6/\sqrt{2\pi})$.

\subsection{Proof of Theorem \ref{main}} 

\subsubsection{Optimality conditions}
The optimality conditions for the LASSO are given by 
\bea 
\label{opt}
-X^t(y-X\hat{\beta})+\lb g=0
\eea
for some $g\in \partial \left(\|\cdot\|_1\right)_{\hat{\beta}}$. Thus, 
we have 
\bea 
X^tX(\hat{\beta}-\beta)=X^t \epsilon-\lb g.
\eea
from which one obtains that, for any index set $\mI \subset \{1,\ldots,p\}$ with cardinal $s$,
\bea 
\label{optinf1}
\left\|X_{\mI}^tX(\beta-\hat{\beta})\right\|_{\infty} & \le & \lb+\left\|X_{\mI}^t\epsilon \right\|_{\infty},
\eea 
\subsubsection{The support of $\hat{\beta}$}
\label{supp}
As is well known, even when the solution of the LASSO optimization problem is not unique, 
there always exists a vector $\hat{\beta}$ whose support has cardinal $n$. 

\subsubsection{A bound on $\|X_\mI^t X_S(\beta_S-\hat{\beta}_S)\|_{\infty}$}

The argument is divided into three steps. 

{\em First step}. Equation (\ref{optinf1}) implies that 
\bea 
\label{optinf2}
\left\|X_{\mI}^tX_S(\beta_S-\hat{\beta}_S)\right\|_{\infty} & \le & \lb + \left\|X_{\mI}^t\epsilon \right\|_{\infty}
+\left\|X_{\mI}^tX_{\Sc}(\beta_{\Sc}-\hat{\beta}_{\Sc})\right\|_{\infty}.
\eea 

{\em Second step}. We now choose $\mI$ as a solution of the following problem
\bean 
\vartheta & = &  \min_{\stackrel{I \subset \{1,\ldots,p\}}{|I| = s}} 
\max_{j\in I} |\langle X_j,X_{\Sc} (\beta_{\Sc}-\hat{\beta}_{\Sc})\rangle|
\eean
subject to 
\bean 
\sigma_{\min}(X_{I}) & \ge & \rho_-.
\eean
By Definition \ref{defgam}, 
\bean  
\vartheta & \le & \gamma_{s,\rho_-}(X) \ \| X_{\Sc}(\beta_{\Sc}-\hat{\beta}_{\Sc})\|_2
\eean
and thus, 
\bean 
\vartheta & \le & \gamma_{s,\rho_-}(X) \ \sigma_{\max}(X_{\Sc}) \| \beta_{\Sc}-\hat{\beta}_{\Sc}\|_2 \\
& \le & \gamma_{s,\rho_-}(X) \ \sigma_{\max}(X_{\Sc}) \| \beta_{\Sc}-\hat{\beta}_{\Sc}\|_1 
\eean
which gives 
\bea
\label{varth}
\vartheta & \le & \gamma_{s,\rho_-}(X) \ \sigma_{\max}(X_{\Sc}) \left(\| \beta_{\Sc}\|_1+\|\hat{\beta}_{\Sc}\|_1\right). 
\eea 

{\em Third step}. Combining (\ref{optinf2}) and (\ref{varth}), we obtain 
\bean
\left\|X_{\mI}^tX_S(\beta_S-\hat{\beta}_S)\right\|_{\infty} & \le & \lb + \left\|X_{\mI}^t\epsilon \right\|_{\infty}
+\gamma_{s,\rho_-}(X) \ \sigma_{\max}(X_{\Sc}) \left(\| \beta_{\Sc}\|_1+\|\hat{\beta}_{\Sc}\|_1\right).
\eean 
Using the fact that 
\bea 
\left\|X_{\mI}^t\epsilon \right\|_{\infty} & \le & \left\|X^t\epsilon \right\|_{\infty}
\eea
and since 
\bean 
\bP \left( \left\|X^t\epsilon \right\|_{\infty} \ge \sg \sqrt{2\alpha \ \log(p)+\log(2p)} \right) & \le & p^{-\alpha},
\eean
we obtain that 
\bea
\nonumber
\left\|X_{\mI}^tX_S(\beta_S-\hat{\beta}_S)\right\|_{\infty} & \le & \lb + \sg \sqrt{(2\alpha+1) \ \log(p)+\log(2)} \\
& & \hspace{1cm}+\gamma_{s,\rho_-}(X) \ \sigma_{\max}(X_{\Sc}) \left(\| \beta_{\Sc}\|_1+\|\hat{\beta}_{\Sc}\|_1\right)
\label{bdlinf}
\eea 
with probability greater than $1-p^{\alpha}$. 

\subsubsection{A basic inequality} The definition of $\hat{\beta}$ gives 
\bean
\frac12 \|y-X\hat{\beta}\|_2^2+\lb \|\hat{\beta}\|_1 & \le & \frac12 \|y-X\beta\|_2^2+\lb \|\beta\|_1 
\eean
Therefore, we have that 
\bean
\frac12 \|\epsilon-X(\hat{\beta}-\beta)\|_2^2+\lb \|\hat{\beta}\|_1 & \le & \frac12 \|\epsilon\|_2^2+\lb \|\beta\|_1
\eean
which implies that 
\bean 
\frac12 \|X(\hat{\beta}-\beta)\|_2^2 & \le & \langle \epsilon, X_S(\hat{\beta}_S-\beta_S)\rangle +
\langle \epsilon, X_{\Sc}(\hat{\beta}_{\Sc}-\beta_{\Sc})\rangle \\
& & +\lb \left(\|\beta_S\|_1-\|\hat{\beta}_S\|_1\right)-\lb \|\hat{\beta}_{\Sc}\|_1+\lb \|\beta_{\Sc}\|_1.
\eean
This can be further written as 
\bea
\label{basic}
\frac12 \|X(\hat{\beta}-\beta)\|_2^2 & \le & \langle \epsilon, X_S(\hat{\beta}_S-\beta_S)\rangle +
\langle X_{\Sc}^t\epsilon, \hat{\beta}_{\Sc}-\beta_{\Sc}\rangle \\
& & +\lb \left(\|\beta_S\|_1-\|\hat{\beta}_S\|_1\right)-\lb \|\hat{\beta}_{\Sc}\|_1+\lb \|\beta_{\Sc}\|_1.
\eea

\subsubsection{Control of $\langle \epsilon,X_S(\hat{\beta}_S-\beta_S)\rangle$}

The argument is divided into two steps. 

{\em First step}. We have 
\bean 
\langle \epsilon,X_S(\hat{\beta}_S-\beta_S)\rangle 
& = & \langle X_S^t\epsilon,\hat{\beta}_S-\beta_S\rangle \\
& \le & \left\|X_S^t\epsilon \right\|_{\infty} \|\hat{\beta}_S-\beta_S\|_1 \\
& \le & \sqrt{s} \ \left\|X_S^t\epsilon \right\|_{\infty} \|\hat{\beta}_S-\beta_S\|_2
\eean
and, using the fact that $\sigma_{\min}(X_{\mI})\ge\rho_-$, 
\bean    
\langle \epsilon,X_S(\hat{\beta}_S-\beta_S)\rangle 
& \le & \frac{s}{\rho_- \ \sigma_{\min}(X_S)}\ \left\|X_S^t\epsilon \right\|_{\infty} 
\|X_{\mI}^t X_S(\hat{\beta}_S-\beta_S)\|_{\infty}.
\eean

{\em Second step}. Since the columns of $X$ have unit $\ell_2$-norm, we have 
\bean 
\bP \left( \left\|X_S^t\epsilon \right\|_{\infty} \ge \sg \sqrt{2\alpha \ \log(p)+\log(2s)} \right) & \le & p^{-\alpha},
\eean
which implies that 
\bea
\label{noise}
\langle \epsilon,X_S(\hat{\beta}_S-\beta_S)\rangle 
& \le & \frac{s \ \sg \sqrt{2\alpha \ \log(p)+\log(2s)}}{\rho_- \ \sigma_{\min}(X_S)}
\ \|X_{\mI}^t X_S\left(\hat{\beta}_S-\beta_S\right)\|_{\infty}
\eea
with probability at least $1-p^{-\alpha}$. 

\subsubsection{Control of $\langle X_{\Sc}^t\epsilon, \hat{\beta}_{\Sc}-\beta_{\Sc}\rangle$}
We have 
\bea 
\label{Sc}
\langle X_{\Sc}^t\epsilon, \hat{\beta}_{\Sc}-\beta_{\Sc}\rangle & \le & 
\left\|X_{\Sc}^t\epsilon \right\|_{\infty} \|\hat{\beta}_{\Sc}-\beta_{\Sc}\|_1. 
\eea
On the other hand, we have 
\bean 
\bP \left( \left\|X_{\Sc}^t\epsilon \right\|_{\infty} \ge \sg \sqrt{2\alpha \ \log(p)+\log(2(p-s))} \right) & \le & p^{-\alpha},
\eean
which, combined with (\ref{Sc}), implies that  
\bean 
\langle X_{\Sc}^t\epsilon, \hat{\beta}_{\Sc}-\beta_{\Sc}\rangle & \le & 
\sg \sqrt{2\alpha \ \log(p)+\log(2(p-s))} \left( \|\hat{\beta}_{\Sc}\|_1+\|\beta_{\Sc}\|_1\right)
\eean
with probability at least $1-p^{-\alpha}$.
 
\subsubsection{Control of $\|\beta_S\|_1-\|\hat{\beta}_S\|_1$}
The subgradient inequality gives 
\bean 
\|\hat{\beta}_S\|_1-\|\beta_S\|_1 & \ge & \langle {\rm sign}(\beta_S),\hat{\beta}_S-\beta_S\rangle. 
\eean
We deduce that  
\bean
\|\beta_S\|_1-\|\hat{\beta}_S\|_1 & \le & \left\|-{\rm sign}(\beta_S)\right\|_{\infty} \|\hat{\beta}_S-\beta_S\|_1 \\
& \le & \frac{\sqrt{s}}{\rho_- \ \sigma_{\min}(X_S)} \|X_{\mI}^tX_S (\hat{\beta}_S-\beta_S)\|_2
\eean
which implies 
\bea
\label{sign}
\|\beta_S\|_1-\|\hat{\beta}_S\|_1 & \le & \frac{s}{\rho_- \ \sigma_{\min}(X_S)} 
\ \|X_{\mI}^tX_S (\hat{\beta}_S-\beta_S)\|_{\infty}.
\eea

\subsubsection{Summing up}
Combining (\ref{basic}) with (\ref{noise}), (\ref{sign}) and (\ref{bdlinf}), the union bound gives that, with probability 
$1-3p^{-\alpha}$, 
\bean
\frac12 \|X(\hat{\beta}-\beta)\|_2^2 & \le & \frac{s}{\rho_- \ \sigma_{\min}(X_S)}
\left(\sg \sqrt{2\alpha \ \log(p)+\log(2s)}+\lb \right) \Bigg(\lb + \sg \sqrt{(2\alpha+1) \ \log(p)+\log(2)} \\
& & \hspace{1cm} 
+\gamma_{s,\rho_-}(X) \ \sigma_{\max}(X_{\Sc}) \left(\| \beta_{\Sc}\|_1+\|\hat{\beta}_{\Sc}\|_1\right)\Bigg) \\
& & +\sg \sqrt{2\alpha \ \log(p)+\log(2(p-s))} \left(\| \beta_{\Sc}\|_1+\|\hat{\beta}_{\Sc}\|_1\right) \\
& & +\lb\left(\| \beta_{\Sc}\|_1-\|\hat{\beta}_{\Sc}\|_1\right) 
\eean
which gives,  
\bean
\frac12 \|X(\hat{\beta}-\beta)\|_2^2 & \le & s \ \frac{ \lb + \sg \sqrt{(2\alpha+1) \ \log(p)+\log(2)}}{\rho_- \ \sigma_{\min}(X_S)}
\left(\sg \sqrt{2\alpha \ \log(p)+\log(2s)}+\lb \right)  \\
& & +\Bigg(\frac{s }{\rho_- \ \sigma_{\min}(X_S)}
\left(\sg \sqrt{2\alpha \ \log(p)+\log(2s)}+\lb \right)\gamma_{s,\rho_-}(X) \ \sigma_{\max}(X_{\Sc}) \\
& & \hspace{1cm}  +\sg \sqrt{2\alpha \ \log(p)+\log(2(p-s))}-\lambda\Bigg) \|\hat{\beta}_{\Sc}\|_1 \\
& & +\Bigg(\frac{s }{\rho_- \ \sigma_{\min}(X_S)}
\left(\sg \sqrt{2\alpha \ \log(p)+\log(2s)}+\lb \right)\gamma_{s,\rho_-}(X) \ \sigma_{\max}(X_{\Sc}) \\
& & \hspace{1cm}  +\sg \sqrt{2\alpha \ \log(p)+\log(2(p-s))}+\lambda\Bigg) \|\beta_{\Sc}\|_1. 
\eean
Using the assumption that $s\le \nu n$, we obtain 
\bean
\frac12 \|X(\hat{\beta}-\beta)\|_2^2 & \le & s \ \frac{ \lb + \sg \sqrt{(2\alpha+1) \ \log(p)+\log(2)}}{
\rho_- \ \sigma_{\min}(X_S)}
\left(\sg \sqrt{2\alpha \ \log(p)+\log(2\nu n)}+\lb \right)  \\
& & +\Bigg(\frac{\nu n }{\rho_- \ \sigma_{\min}(X_S)}
\left(\sg \sqrt{2\alpha \ \log(p)+\log(2\nu n)}+\lb \right)\gamma_{s,\rho_-}(X) \ \sigma_{\max}(X_{\Sc}) \\
& & \hspace{1cm}  +\sg \sqrt{(2\alpha+1) \ \log(p)+\log(2)}-\lambda\Bigg) \|\hat{\beta}_{\Sc}\|_1 \\
& & +\Bigg(\frac{\nu n}{\rho_- \ \sigma_{\min}(X_S)}
\left(\sg \sqrt{2\alpha \ \log(p)+\log(2\nu n)}+\lb \right)\gamma_{s,\rho_-}(X) \ \sigma_{\max}(X_{\Sc}) \\
& & \hspace{1cm}  +\sg \sqrt{2\alpha \ \log(p)+\log(2(p))}+\lambda\Bigg) \|\beta_{\Sc}\|_1. 
\eean
Since, as recalled in Section \ref{supp}, the support of $\hat{\beta}$ has cardinal less than or equal 
to $n$, we have  
\bean 
\sigma_{\max}(X_{\Sc}) & \le & \max_{\stackrel{T\subset \{1,\ldots,p\}}{|T|\le n}}\sigma_{\max}(X_T),
\eean 
and the proof is completed. 

\section{A simple trick when $\gamma_{s,\rho_-}$ is unknown: appending a random matrix}
\label{GaussAppend}
We have computed the index $\gamma_{s,\rho_-}$ for the random matrix with independent 
columns uniformly distributed on the unit sphere of $\mathbb R^n$ in Theorem \ref{gamtheo}. 
The goal of this section is to show that this result can be used in a simple trick 
in order to obtain prediction bounds similar to \cite[Theorem 2.1]{CandesPlan:AnnStat09}
without conditions on the design matrix $X$. 

This idea is of course to use Theorem \ref{main} above. However, the values of $\sigma_{\min}(X_S)$ and 
$\sigma_{\max}(X_{\Sc})$ are of course usually not known ahead of time and we have to provide easy to 
compute bounds for these quantities. The coherence $\mu(X)$ can be used for this purpose. Indeed, 
for any positive integer $t\le p$ and any $T\subset \{1,\ldots,p\}$ with $|T|=t$, we have 
\bean 
\mu(X) & = & \|X^tX-I\|_{1,1} \\
& = & \max_{\|w\|_\infty =1} \max_{\|w^\prime\|_1=1} w^t(X^tX-I)w^\prime \\
& \ge & \frac1{\sqrt{t}} 
\max_{\stackrel{\|w\|_2=1}{\|w\|_0=t}} \max_{\stackrel{\|w^\prime\|_2=1}{\|w^\prime\|_0=t}} 
w^t(X^tX-I)w^\prime. 
\eean 
Thus, we obtain that 
\bean 
1-\mu(X)\sqrt{t} & \le \sigma_{\min}(X_T) \le \sigma_{\max}(X_T) \le & 1+\mu(X)\sqrt{t}. 
\eean 
However, the lower bound on $\sigma_{\min}(X_S)$ obtained in this manner may not 
be accurate enough. More precise, polynomial time computable, bounds have been 
devised in the litterature. The interested reader can find a very useful Semidefinite relaxation 
of the problem of finding the worst possible value of $\sigma_{\min}(X_T)$ over all 
subsets $T$ of $\{1,\ldots,p\}$ with a given cardinal (related to the Restricted Isometry 
Constant) in \cite{DAspremontBachElGhaoui:JMLR08}. 

Assuming we have a polynomial time computable a priori bound $\sigma_{\min}^*$ on
$\sigma_{\min}(X_T)$ (resp. $\sigma_{\max}^*$ on 
$\max_{\stackrel{T\subset \{1,\ldots,p\}}{|T|\le n}}\sigma_{\max}(X_T)$), 
our main result for the case of general design matrices is the 
following theorem. 
\begin{theo}
Let $X$ be an matrix in $\R^{n\times p}$ with $\ell_2$-normalized columns and 
let $X_0$ be a random matrix with independent columns uniformly distributed on the unit 
sphere of $\R^n$. Let $X_\sharp$ denote the matrix corresponding to the concatenation of $X$ and $X_0$, 
i.e. $X_\sharp=[X,X_0]$. Let $\hat{\beta}_\sharp$ denote the LASSO estimator 
with $X$ replaced with $X_\sharp$ in (\ref{lasso}).  
Let $\rho_-\in (0,1)$. Let $\nu$ be a positive real. Assume that 
$p_0$ is such that 
\bea
\label{indconcat}
80\ \frac{\log(p_0)}{p_0} & < & L \ \rho_- \frac{\sigma_{\min}^*}{\nu n \ \sigma_{\max}^*}
\eea 
for some $L\in (0,1)$. Assume moreover that $p_0$ is sufficiently large 
so that the second inequality in (\ref{n}) is satisfied. 
Assume that $\beta$ has support  $S$ with cardinal $s$ and that  
\bean
\lb & \ge &  \sg  \left(B^\prime_{X,\nu,\rho_-} \ \sigma_{\max}^*
\sqrt{2\alpha \ \log(p+p_0)+\log(2\nu n)}+ \sqrt{(2\alpha+1) \ \log(p+p_0)+\log(2)}
\right)  
\eean 
with 
\bea 
B^\prime_{X,\nu,\rho_-} & = & 
\frac{\nu n \ \gamma_{\nu n,\rho_-}(X)}{\rho_- \ \sigma_{\min}^*- \nu n \ \gamma_{\nu n,\rho_-}(X)\ 
\sigma_{\max}^*}.
\eea
Assume that $s$ satisfies the first inequality in (\ref{n}) and that $s\le \nu n$.
Then, with probability greater than $1-p^{-\alpha}-9 p_0^{-n}-20 \ \frac{n}{\log(p_0)^{n-1}} p_0^{-1}$, we have 
\bea
\frac12 \|X(\hat{\beta}_{\#}-\beta)\|_2^2 & \le & s \ C^\prime_{n,p,\rho_-,\alpha,\nu,\lb} 
\eea
with 
\bean
C^\prime_{n,p,\rho_-,\alpha,\nu,\lb} & = & \frac{\lb + \sg \sqrt{(2\alpha+1) \ \log(p+p_0)+\log(2)}}{
\rho_- \ \sigma_{\min}^*}
\left(\sg \sqrt{2\alpha \ \log(p+p_0)+\log(2\nu n)}+\lb \right)
\eean
\end{theo}

\begin{proof}
Since the index $\gamma_{s,\rho-}$ does not increase after appending a matrix with $\ell_2$-
normalized columns, the matrix $X_{\#}$ has at most the same index as that of $X_0$. Then 
(\ref{indconcat}) ensures that the index $\gamma_{s,\rho-}(X_{\#})$ is sufficiently 
small. The rest of the proof is identical to the proof of Theorem \ref{main}. 
\end{proof}

\appendix 

\section{Proof of Lemma \ref{submat}}
For any index set $S\subset \{1,\ldots,\kappa s\}$ with cardinal $s$, 
define $R_S$ as the diagonal matrix with 
\bean 
(R_{S})_{i,i} & = & 
\begin{cases}
1 \textrm{ if } i\in S, \\
0 \textrm{ otherwise.}
\end{cases}
\eean
Notice that we have 
\bean 
\left\|X_S^tX_S-I\right\| & = & \left\|R_SHR_S\right\| 
\eean
with $H=X^tX-I$. In what follows, $R_\delta$ simply denotes a diagonal 
matrix with i.i.d. diagonal components $\delta_j$, $j=1,\ldots,\kappa s$ 
with Bernoulli $B(1,1/\kappa)$ distribution. Let $R^{\prime}$ be an independent copy of $R$.
Assume that $S$ is drawn uniformly at random among index sets of $\{1,\ldots,\kappa s\}$
with cardinal $s$. By an easy Poissonization argument, similar to \cite[Claim $(3.29)$ p.2173]{CandesPlan:AnnStat09},
we have that 
\beq \label{poisse}
\bP \left(\|R_sHR_s\|\ge r \right) \ \le \ 2\ \bP \left(\|RHR\|\ge r\right),
\eeq
and by Proposition 4.1 in \cite{ChretienDarses:SPL12}, we have that  
\bea \label{dec}
\bP\left(\|RHR\|\ge r\right) & \le & 36\ \bP\left(\|RHR^\prime\|\ge r/2 \right).
\eea
In order to bound the right hand side term, we will use \cite[Proposition 4.2]{ChretienDarses:SPL12}. 
Set $r^\prime=r/2$. Assuming that $\kappa \frac{{r^\prime}^2}{e} \ge u^2 \ge \frac{1}{\kappa}\|X\|^4$ and 
$v^2\ge \frac{1}{\kappa}\|X\|^2$, the right hand side term can be bounded from above as follows: 
\bea\label{inv_bound}
\bP \left(\|RHR'\|\ge r^\prime \right) & \le & 3 \ \kappa s\ \mathcal  V(s,[r^\prime,u,v]),
\eea
with
\bean
\mathcal  V(s,[r^\prime ,u,v]) & = & \left(e\frac{1}{\kappa} \frac{u^2}{{r^\prime}^2} \right)^{\frac{{r^\prime}^2}{v^2}} 
+\left(e \frac{1}{\kappa}\frac{\|M\|^4}{u^2} \right)^{u^2/\|M\|^2} 
+\left(e \frac{1}{\kappa}\frac{\|M\|^2}{v^2} \right)^{v^2/\mu(M)^2}.
\eean
Using (\ref{mutilde}) and (\ref{normbnd}), we deduce that with probability at least 
$1-8 p_0^{-n}-p_0^{-n}$, we have 
\bean
\mathcal  V(s,[r^\prime,u,v]) & = & \left(e\frac{1}{\kappa} \frac{u^2}{{r^\prime}^2} \right)^{\frac{{r^\prime}^2}{v^2}} 
+\left(e \frac{1}{\kappa}\frac{\left(\frac{1+K_\epsilon}{c(1-\epsilon)^4}\frac{n+\kappa s}{n} \log(p_0)\right)^4}{u^2} \right)
^{\frac{u^2}{\left(\frac{1+K_\epsilon}{c(1-\epsilon)^4}\frac{n+\kappa s}{n} \log(p_0)\right)^2}}  \\
& & \hspace{.5cm} +\left(e \frac{1}{\kappa}
\frac{\left(\frac{1+K_\epsilon}{c(1-\epsilon)^4}\frac{n+\kappa s}{n} \log(p_0)\right)^2}{v^2} \right)^{\frac{v^2}{\frac12 \ p_0^{-2}}}.
\eean
Take $\kappa$, $u$ and $v$ such that 
\bean 
v^2 & = &  {r^\prime}^2 \ \frac1{\log(C_\kappa\ n)}  \\
\\
u^2 & = &  C_{\cal V}\ \left( \frac{1+K_\epsilon}{c(1-\epsilon)^4}\frac{n+\kappa s}{n} \log(p_0)\right)^2,\\
\\
\kappa & \ge & e^3 \ \frac{C_{\cal V}}{{r^\prime}^2} \ \left( \frac{1+K_\epsilon}{c(1-\epsilon)^4}\frac{n+\kappa s}{n} \log(p_0)\right)^2 \\
\eean
for some $C_{\cal V}$ possibly depending on $s$. Since $\kappa s\le C_{\kappa} n$,
this implies in particular that 
\bea 
\label{kap}
\kappa & \ge & e^3 \ \frac{C_{\cal V}}{{r^\prime}^2} \ \left( \frac{(1+K_\epsilon)(1+C_{\kappa})}{c(1-\epsilon)^4} \log(p_0)\right)^2.
\eea
Thus, we obtain that 
\bean
\mathcal  V(s,[r^\prime,u,v]) & = & \left(\frac{1}{e^2} \right)^{\log(C_\kappa n)} 
+\left(\frac{{r^\prime}^2}{e^2 \ C_{\cal V}^2} \right)^{C_{\cal V}}
+\left(\frac{\log(C_\kappa n)}{e^2\ C_{\cal V}} \right)^{\frac{2 {r^\prime}^2\ p_0^2}{\log(C_\kappa n)}}.
\eean
Using (\ref{poisse}), (\ref{dec}) and (\ref{inv_bound}), we obtain that 
\bean 
\bP \left(\|R_sHR_s\|\ge r^\prime \right) & \le & 2\times 36 \times 3 \times \kappa s 
\left(\left(\frac{1}{e^2} \right)^{\log(C_\kappa n)} 
+\left(\frac{{r^\prime}^2}{e^2\ C_{\cal V}^2} \right)^{C_{\cal V}}
+\left(\frac{\log(C_\kappa n)}{e^2\ C_{\cal V}} \right)^{\frac{2 {r^\prime}^2\ p_0^2}{\log(C_\kappa n)}}\right).
\eean 
Take 
\bea
\label{CV}
C_{\cal V}  & = & \log(C_\kappa n)
\eea
and, since $p_0>1$ and $r \in (0,1)$, we obtain 
\bea
& & \bP \left(\|R_sHR_s\|\ge r^\prime \right) \nonumber \\
\label{probe}
& & \hspace{2cm} \le 2\times 36 \times 3 \times \kappa s 
\left(\left(\frac{1}{e^2} \right)^{\log(C_\kappa n)} 
+\left(\frac{{r^\prime}^2}{e^2 \ \log^2(C_\kappa n)} \right)^{\log(C_\kappa n)}+\left(\frac{1}{e^2} \right)^{\frac{2 {r^\prime}^2\ p_0^2}{\log(C_\kappa n)}}\right).
\eea 
Replace $r^\prime$ by $r/2$. Since it is assumed that $n \ge \exp(r/2)/C_{\kappa}$ and $p_0\ge \sqrt{2} \log(C_{\kappa}n)/r$, 
it is sufficient to impose that 
\bean 
C_\kappa^2 n^2  & \ge & \left(2\times 36 \times 3 \times \kappa s\times 3\right)^{\frac1{\log(e^2)}},
\eean
in order for the right hand side of (\ref{probe}) to be less than one. 
Since $\kappa s\le C_{\kappa}n$, it is sufficient to impose that 
\bean 
C_\kappa^2 n^2  & \ge & 2\times 36 \times 3 \times \ C_\kappa n\times 3,
\eean 
or equivalently, 
\bean 
C_\kappa n  & \ge & 2\times 36 \times 3 \times 3.
\eean 
This is implied by (\ref{n}) in the assumptions. 
On the other hand, combining (\ref{kap}) and (\ref{CV}) implies that one can take 
\bean 
\kappa & = & \frac{4e^3}{r^2} \ \left( \frac{(1+K_\epsilon)(1+C_{\kappa})}{c(1-\epsilon)^4}\right)^2 \log^2(p_0) \log(C_\kappa n),
\eean 
which is nothing but (\ref{kappa}) in the assumptions.

\end{document}